\newenvironment{wst}
{\setlength{\leftmargini}{1.5\parindent}
	\begin{itemize}
		\setlength{\itemsep}{-1.1mm}}
	{\end{itemize}}
\numberwithin{equation}{section}
\newtheorem{theorem}{Theorem}[section]
\newtheorem{corollary}[theorem]{Corollary}
\newtheorem{definition}[theorem]{Definition}
\newtheorem{lemma}[theorem]{Lemma}
\begin{document}
\baselineskip=16pt

\title{Signless Laplacian spectral radius and fractional matchings in graphs\footnote{This work was supported by the National Natural Foundation of China [61773020]. Corresponding author: Yingui Pan(panygui@163.com)
 }
}

\author{Yingui Pan$^{a}$,   Jianping Li$^{a}$   \\
	\small  $^{a}$College of Liberal Arts and Sciences, National University of Defense Technology, \\
	\small  Changsha, China, 410073.\\
}

\date{\today}

\maketitle

\begin{abstract}
A fractional matching of a graph $G$ is a function $f$ giving each edge a number in $[0,1]$ such that $\sum_{e\in\Gamma(v)}f(e)\leq1$ for each vertex $v\in V(G)$, where $\Gamma(v)$ is the set of edges incident to $v$. The fractional matching number of $G$, written $\alpha^{\prime}_*(G)$, is the maximum value of $\sum_{e\in E(G)}f(e)$ over all fractional matchings. In this paper, we investigate the relations between the fractional matching number and the signless Laplacian spectral radius of a graph. Moreover, we give some sufficient spectral conditions for the  existence of a fractional perfect matching.
\end{abstract}



\section{Introduction}
Graphs considered in this paper are simple and undirected. Let $G$ be a graph with vertex set $V(G)$ and edge set $E(G)$ such that $|V(G)|=n$ and $|E(G)|=m$. As usual, $d(u)$ stands for the \textit{degree} of a vertex $u$ in $G$.  The \textit{adjacent matrix} of $G$ is $A(G)=(a_{ij})_{n\times n}$, where $a_{ij}=1$ if $i$ and $j$ are adjacent, and $a_{ij}=0$ otherwise. The \textit{diagonal matrix} of $G$ is $D(G)=(d(i))_{n\times n}$, where $d(i)$ is the degree of vertex $i$. Let $\lambda_1(G)\geq\lambda_2(G)\geq\cdots\geq\lambda_n(G)$, $\mu_1(G)\geq\mu_2(G)\geq\ldots\geq\mu_n(G)$ and $q_1(G)\geq q_2(G)\geq\ldots\geq q_n(G)$ be the eigenvalues of $A(G)$, $L(G)$ and $Q(G)$, respectively, where $L(G)=D(G)-A(G)$ and $Q(G)=D(G)+A(G)$. Particularly, the eigenvalues $\lambda_1(G)$, $\mu_1(G)$ and $q_1(G)$ are called the \textit{spectral radius},  \textit{Laplacian spectral radius} and \textit{signless Laplacian spectral radius} of $G$, respectively.  For a set $S\subseteq V(G)$, let $G[S]$ denote the subgraph of $G$ induced by $S$, and let $G-S$ be the graph obtained from $G$ by deleting the vertices in $S$ together with their incident edges.  The \textit{complement graph} $G^c$ of $G$ is the graph whose vertex set is $V(G)$ and whose edges are the pairs of nonadjacent vertices of $G$. For any two vertex-disjoint graphs $G_1$ and $G_2$, we use $G_1\vee G_2$ to denote the \textit{join} of graphs $G_1$ and $G_2$ and $G_1\cup G_2$ to denote the \textit{disjoint union} of graphs $G_1$ and $G_2$.

An edge set $M$ of $G$ is called a \textit{matching} if any two edges in $M$ have no common vertices. If each vertex of $G$ is incident with exactly one edge of $M$, then $M$ is called a \textit{perfect matching} of $G$. The \textit{matching number} of a graph $G$, denoted by $\alpha^{\prime}(G)$, is the number of edges in a maximum matching. A fractional matching of a graph $G$ is a function $f$ giving each edge a number in $[0,1]$ such that $\sum_{e\in\Gamma(v)}f(e)\leq1$ for each vertex $v\in V(G)$, where $\Gamma(v)$ is the set of edges incident to $v$. The fractional matching number of $G$, written $\alpha^{\prime}_*(G)$, is the maximum value of $\sum_{e\in E(G)}f(e)$ over all fractional matchings $f$. A \textit{fractional perfect matching} of a graph $G$ is a fractional matching $f$ with $\alpha^{\prime}_*(G)=\sum_{e\in E(G)}f(e)=\frac{n}{2}$, and a fractional perfect matching $f$ of a graph $G$ is a perfect matching if it takes only the values 0 or 1.


Fractional matching has attracted many researchers' attention.  Behrend et al. \cite{beh} established a lower bound on the fractional matching number of a graph with given some graph parameters and characterized the graphs whose fractional matching number attains the lower bound. Choi et al. \cite{cho} gave the tight upper bounds on the difference and ratio of the fractional matching number and matching number among all $n$-vertex graphs, and  characterized the infinite family of graphs where equalities hold.  O \cite{o} investigated the relations between the spectral radius of a connected graph with minimum degree $\delta$ and its fractional matching number, and gave a lower bound on the fractional matching number in terms of the spectral radius and minimum degree. Xue \cite{xue} studied the connections between the fracional matching number and the Laplacian spectral radius of a graph, and obtained some lower bounds on the fractional matching number of a graph. Moreover, they presented some sufficient spectral conditions for the existence of a fractional perfect matching.

 Motivated by \cite{o,xue}, we investigate the relations between the signless Laplacain spectral radius of a graph and its fractional matching number. In Section 2, we list some useful lemmas. In Section 3, we establish a lower bound on the fraction number of a graph in terms of its signless Laplacian spectral radius and minimum degree. In Section 4, we obtain some sufficient spectral conditions for the existence of a fractional perfect matching.

\section{Preliminaries}
In this section, we list some lemmas which will be used in our paper later. Some fundamental properties of fractional matching were obtained in \cite{sch}.
\begin{lemma}\cite{sch}\label{lemm0}
For any graph $G$, let $\alpha^{\prime}_*(G)$ be the fractional matching number of $G$. Then
\begin{wst}
	\item[{\rm (i)}] $2\alpha^{\prime}_*(G)$ is an integer.
	\item[{\rm (ii)}]
	 $\alpha^{\prime}_*(G)=\frac{1}{2}(n-\max\{i(G-S)-|S|\})$, where the maximum is taken over all $S\subseteq V(G)$.
\end{wst}
\end{lemma}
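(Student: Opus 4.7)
The plan is to view the fractional matching number as the optimum of the linear program $\max \mathbf{1}^\top f$ subject to $Af \le \mathbf{1}$, $f \ge 0$, where $A$ is the vertex--edge incidence matrix of $G$. Its LP dual is the fractional vertex cover program $\min \mathbf{1}^\top y$ subject to $y_u + y_v \ge 1$ for every $uv \in E(G)$ and $y \ge 0$, so strong LP duality gives $\alpha'_*(G) = \tau^*(G)$, the fractional vertex cover number of $G$. Both parts of the lemma will fall out from an analysis of extreme optimal solutions of these two LPs together with the classical half-integrality statement that their extreme points take values in $\{0, \tfrac{1}{2}, 1\}$.

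For the upper bound in (ii) I would fix $S \subseteq V(G)$, let $I$ be the set of isolated vertices of $G - S$ and $T = V(G) \setminus (S \cup I)$, and then estimate $2\sum_{e} f(e) = \sum_{v}\sum_{e \in \Gamma(v)} f(e)$ by splitting the outer sum over the three blocks. The contributions from $S$ and $T$ are bounded by $|S|$ and $|T|$ using the matching constraints directly. The contribution from $I$ is more delicate, but since every edge meeting $I$ has its other endpoint in $S$, that sum equals $\sum_{e\text{ between }I,S} f(e)$, which is in turn at most $\sum_{v \in S}\sum_{e \in \Gamma(v)} f(e) \le |S|$. Adding these up yields $2 \alpha'_*(G) \le |S| + |T| + |S| = n - (i(G-S) - |S|)$, and taking the maximum over $S$ proves one direction.

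For the matching lower bound I would take a half-integral optimal dual $y^* \in \{0, \tfrac{1}{2}, 1\}^{V(G)}$ and partition $V(G)$ into $S = \{v:y^*_v = 1\}$, $H = \{v:y^*_v = \tfrac{1}{2}\}$ and $O = \{v:y^*_v = 0\}$. The dual feasibility $y^*_u + y^*_v \ge 1$ forces every neighbour of a vertex of $O$ to lie in $S$, so $O \subseteq I(G-S)$; conversely, any vertex of $I(G-S)\cap H$ could be reassigned to $O$ without violating feasibility, contradicting optimality, so $O = I(G-S)$. A short bookkeeping check then gives
\begin{equation*}
2\alpha'_*(G)=2\tau^*(G)=2|S|+|H|=n+|S|-|O|=n-\bigl(i(G-S)-|S|\bigr),
\end{equation*}
which combined with the upper bound settles (ii). Part (i) drops out immediately, since $2|S|+|H|$ is patently an integer.

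The main obstacle in this plan is the half-integrality of the fractional vertex cover polytope (equivalently, via Balinski's theorem, of the fractional matching polytope); this is the genuinely structural ingredient, and once it is in hand everything else reduces to bookkeeping and LP duality.
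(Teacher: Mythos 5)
The paper offers no proof of this lemma --- it is quoted from Scheinerman and Ullman --- so there is no in-paper argument to measure you against; I can only assess your proof on its own terms, and it is correct. It is essentially the standard derivation of the fractional Berge--Tutte formula via linear programming. Your upper bound is sound: in $2\sum_{e}f(e)=\sum_{v}\sum_{e\in\Gamma(v)}f(e)$ the blocks $S$ and $T$ contribute at most $|S|$ and $|T|$, and since every edge meeting $I$ has its other end in $S$, the $I$-block is a subsum of $\sum_{v\in S}\sum_{e\in\Gamma(v)}f(e)\le|S|$; charging the $I$--$S$ edges to the $S$-constraints a second time only weakens the bound in the harmless direction, giving $2\alpha'_*(G)\le n-(i(G-S)-|S|)$ for every $S$. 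The dual argument is also right: feasibility forces $O=\{v:y^*_v=0\}\subseteq I(G-S)$ with $S=\{v:y^*_v=1\}$, and optimality rules out an isolated vertex of $G-S$ carrying value $\tfrac12$, so $O=I(G-S)$ and $2\tau^*(G)=2|S|+|H|=n-(i(G-S)-|S|)$, which together with the upper bound yields (ii), and (i) is immediate. The one substantive ingredient you import as a black box is the existence of a half-integral optimal fractional vertex cover; you flag this honestly, and it is a classical fact (usually credited to Nemhauser--Trotter on the cover side --- Balinski's theorem is more properly the half-integrality of the fractional \emph{matching} polytope, though either version would do). Worth noting: Scheinerman--Ullman argue on the primal side, decomposing the support of a half-integral optimal fractional matching into a matching plus odd cycles, which gives (i) without invoking duality at all; your dual route costs you strong duality but delivers the Gallai-type identity (ii) more directly.
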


\begin{lemma}\cite{shen}\label{lemmm1}
	Let $G$ be a connected graph. If $H$ is a subgraph of $G$, then $q_1(H)\leq q_1(G)$.
\end{lemma}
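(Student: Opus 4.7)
The plan is to apply the Rayleigh quotient characterisation of $q_1$ to a carefully chosen test vector, namely the nonnegative Perron eigenvector of $Q(H)$ extended by zeros to $V(G)$. The key analytic tool is the well-known quadratic form identity
$$y^{T}Q(G)y=\sum_{uv\in E(G)}(y_{u}+y_{v})^{2}\qquad\text{for every } y\in\mathbb{R}^{V(G)},$$
together with the variational formula $q_{1}(G)=\max_{y\neq 0}\,y^{T}Q(G)y/\|y\|^{2}$. Both facts are standard and I would simply quote them.

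Next, because $Q(H)=D(H)+A(H)$ has nonnegative entries, Perron--Frobenius supplies a vector $x\in\mathbb{R}^{V(H)}$ with $x\geq 0$, $\|x\|=1$ and $Q(H)x=q_{1}(H)x$. I would extend $x$ to $\tilde{x}\in\mathbb{R}^{V(G)}$ by setting $\tilde{x}_{v}=x_{v}$ for $v\in V(H)$ and $\tilde{x}_{v}=0$ for $v\in V(G)\setminus V(H)$; then $\|\tilde{x}\|=\|x\|=1$.

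The main computation is the chain of inequalities
$$q_{1}(G)\;\geq\;\tilde{x}^{T}Q(G)\tilde{x}\;=\;\sum_{uv\in E(G)}(\tilde{x}_{u}+\tilde{x}_{v})^{2}\;\geq\;\sum_{uv\in E(H)}(x_{u}+x_{v})^{2}\;=\;x^{T}Q(H)x\;=\;q_{1}(H),$$
where the first step is Rayleigh, the second applies the quadratic form identity, and the third uses $E(H)\subseteq E(G)$ together with the fact that every discarded term is nonnegative (on edges with at least one endpoint outside $V(H)$, the summand is either $0$ or $\tilde{x}_{u}^{2}\geq 0$; on edges of $H$, $\tilde{x}$ agrees with $x$). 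This yields the desired inequality.

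I do not expect a real obstacle: the argument is essentially a one-line Rayleigh quotient estimate. The only subtlety worth flagging is that the notion of ``subgraph'' allows both vertex and edge deletion simultaneously; the zero-extension takes care of the vertex-deletion component, while the restriction $E(H)\subseteq E(G)$ absorbs the edge-deletion component, so the two kinds of deletion are handled by a single uniform calculation. Connectedness of $G$ is not strictly needed for the inequality, but it would become relevant if one wanted to characterise when equality holds, since then the Perron vector of $Q(G)$ is strictly positive and a standard strict-inequality argument on any edge in $E(G)\setminus E(H)$ would apply.
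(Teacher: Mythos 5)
Your proof is correct; note that the paper does not prove this lemma at all but simply cites it from [shen], and your Rayleigh-quotient argument (extend an eigenvector of $Q(H)$ by zeros, use $y^{T}Q(G)y=\sum_{uv\in E(G)}(y_{u}+y_{v})^{2}$, and drop the nonnegative terms on $E(G)\setminus E(H)$) is exactly the standard proof of that cited result. One small remark: since every summand $(\tilde{x}_{u}+\tilde{x}_{v})^{2}$ is a square, the Perron--Frobenius nonnegativity of $x$ is not actually needed for the inequality -- any unit eigenvector of $Q(H)$ for $q_{1}(H)$ works -- though, as you say, positivity would matter for an equality analysis.
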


\begin{lemma}\cite{cve1}\label{k1}
	Let $K_n$ be a complete graph of order $n$, where $n\geq2$. Then $q_1(K_n)=2n-2$.
	\end{lemma}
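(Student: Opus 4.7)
The plan is to use the explicit structure of $K_n$ to diagonalize $Q(K_n)$ directly, since $K_n$ is regular and highly symmetric.

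First I would note that $K_n$ is $(n-1)$-regular, so the degree matrix is $D(K_n)=(n-1)I_n$, while the adjacency matrix can be written as $A(K_n)=J_n-I_n$, where $J_n$ is the $n\times n$ all-ones matrix. Adding these gives
\[
Q(K_n)=D(K_n)+A(K_n)=(n-2)I_n+J_n.
\]
Then I would invoke the well-known spectrum of $J_n$: it has eigenvalue $n$ with eigenvector $\mathbf{1}$ and eigenvalue $0$ with multiplicity $n-1$ (on the hyperplane $\mathbf{1}^\perp$). Since $I_n$ commutes with everything, the spectrum of $Q(K_n)$ is obtained by shifting: the eigenvalues are $(n-2)+n=2n-2$, simple, and $(n-2)+0=n-2$, with multiplicity $n-1$.

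Taking the largest of these yields $q_1(K_n)=2n-2$, as claimed. There is no real obstacle here; the only thing to be slightly careful about is checking the boundary case $n=2$, where $K_2$ has $Q=\left(\begin{smallmatrix}1&1\\1&1\end{smallmatrix}\right)$ with largest eigenvalue $2=2\cdot 2-2$, consistent with the formula.
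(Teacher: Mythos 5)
Your proof is correct. The paper does not prove this lemma at all --- it simply cites it from the literature (Cvetkovi\'c--Simi\'c) --- so your argument supplies a self-contained verification that the source omits. The computation $Q(K_n)=D(K_n)+A(K_n)=(n-1)I_n+(J_n-I_n)=(n-2)I_n+J_n$ is exactly right, the spectrum of $J_n$ is as you state, and the simultaneous diagonalizability of $I_n$ and $J_n$ makes the shift argument valid, giving eigenvalues $2n-2$ (simple) and $n-2$ (multiplicity $n-1$). The check at $n=2$ is a nice touch but not logically necessary, since the general argument already covers it.
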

We now explain the concepts of the equitable matrix and equitable partition.
\begin{definition}\cite{bre}\label{def1}
Let $M$ be a real matrix of order $n$ described in the following block form
$$M=\left(\begin{array}{ccc}
M_{11}&  \cdots& M_{1t} \\
\vdots&  \ddots&  \vdots\\
M_{t1}& \cdots&  M_{tt}
\end{array}
\right),$$
where the blocks $M_{ij}$ are $n_i\times n_j$ matrices for any $1\leq i,j\leq t$ and $n=n_1+\ldots+n_t$. For $1\leq i,j\leq t$, let $b_{ij}$ denote the average row sum of $M_{ij}$, i.e. $b_{ij}$ is the sum of all entries in $M_{ij}$ divided by the number of rows. Then $B(M)=(b_{ij})$ (simply by $B$) is called the quotient matrix of $M$. If for each pair $i,j$, $M_{ij}$ has constant row sum, then $B$ is called the equitable quotient matrix of $M$ and the partition is called equitable.
\end{definition}
\begin{lemma}\cite{bre}\label{le1}
	Let $M$ be a symmetric real matrix. If $M$ has an equitable partition and $B$ is the corresponding matrix, then each eigenvalue of $B$ is also an eigenvalue of $M$.
\end{lemma}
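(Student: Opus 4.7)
The plan is to exploit the characteristic matrix of the equitable partition in order to lift eigenvectors of $B$ directly to eigenvectors of $M$. Write $V_1, \ldots, V_t$ for the parts of the partition (so $|V_i| = n_i$), and define the $n \times t$ characteristic matrix $P$ by $P_{ki} = 1$ if the index $k$ lies in $V_i$ and $P_{ki} = 0$ otherwise. The columns of $P$ are the indicator vectors of disjoint nonempty parts, so they are nonzero and pairwise orthogonal; in particular $P$ has full column rank $t$.

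The core step I would carry out is to verify the identity $MP = PB$. Fix a row index $k$, say with $k \in V_j$, and a column index $i$. Then
\[
(MP)_{ki} \;=\; \sum_{\ell=1}^{n} M_{k\ell} P_{\ell i} \;=\; \sum_{\ell \in V_i} M_{k\ell},
\]
which is precisely the row-$k$ sum of the block $M_{ji}$. Because the partition is equitable, this row sum does not depend on the choice of $k \in V_j$ and equals $b_{ji}$. On the other hand $(PB)_{ki} = \sum_{s} P_{ks} B_{si} = B_{ji} = b_{ji}$, so the identity $MP = PB$ holds.

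Given any eigenvalue $\lambda$ of $B$ with eigenvector $y \neq 0$, I would then set $x = Py$ and compute
\[
Mx \;=\; M(Py) \;=\; (MP)y \;=\; (PB)y \;=\; P(\lambda y) \;=\; \lambda x.
\]
Since $P$ has full column rank, $x = Py \neq 0$, so $\lambda$ is an eigenvalue of $M$ with eigenvector $x$. There is essentially no serious obstacle beyond the bookkeeping for $MP = PB$; the one mild subtlety is that $B$ need not be symmetric, so a priori $y$ (and hence $\lambda$) could be complex, but the displayed calculation still produces a nonzero eigenvector of $M$ for $\lambda$, and the symmetry of $M$ then forces $\lambda \in \mathbb{R}$.
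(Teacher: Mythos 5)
Your proof is correct and is exactly the standard argument from the cited reference (Brouwer--Haemers): the paper itself states this lemma without proof, and the intertwining identity $MP=PB$ for the characteristic matrix $P$, together with the full-column-rank lifting of eigenvectors, is precisely how the source establishes it. Your remark that $M$'s symmetry forces $\lambda\in\mathbb{R}$ even though $B$ need not be symmetric is a correct and welcome touch.
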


 The relation  between $\lambda_1(B)$ and $\lambda_1(M)$ is obtained as below.

\begin{lemma}\cite{you}\label{lemm1}
	Let $B$ be the equitable matrix of $M$ as defined in Definition \ref{def1}, and $M$ be a nonnegative matrix. Then $\lambda_1(B)=\lambda_1(M)$.
\end{lemma}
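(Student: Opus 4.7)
The plan is to sandwich $\lambda_1(M)$ between two copies of $\lambda_1(B)$. One direction is essentially free: Lemma~\ref{le1} already says every eigenvalue of $B$ is an eigenvalue of $M$, so in particular $\lambda_1(B)$ is an eigenvalue of $M$, and hence $\lambda_1(B)\le\lambda_1(M)$. The whole problem therefore reduces to the reverse inequality $\lambda_1(M)\le\lambda_1(B)$, which is the step that must genuinely use the nonnegativity hypothesis.

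The algebraic heart of the argument is the intertwining identity $MP=PB$, where $P\in\mathbb{R}^{n\times t}$ is the characteristic matrix of the partition, defined by $P_{vj}=1$ if vertex $v$ lies in the $j$-th part and $0$ otherwise. One checks directly that the $(v,j)$-entry of $MP$ is the row-sum of row $v$ across the $j$-th block of $M$; by the equitable hypothesis this equals the common value $b_{ij}$ for every $v\in V_i$, which is precisely $(PB)_{vj}$. Transposing (and using symmetry of $M$, which is implicit from the use of Lemma~\ref{le1}) gives $P^{T}M=B^{T}P^{T}$. Now Perron--Frobenius supplies a nonnegative principal eigenvector $x\ge 0$, $x\ne 0$, with $Mx=\lambda_1(M)\,x$, and substituting into the transposed identity yields
$$B^{T}(P^{T}x)=P^{T}(Mx)=\lambda_1(M)\,P^{T}x.$$
Because $x\ge 0$ is nonzero and each part $V_i$ is nonempty, at least one coordinate $(P^{T}x)_i=\sum_{v\in V_i}x_v$ is strictly positive, so $P^{T}x\ne 0$. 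Therefore $\lambda_1(M)$ is an eigenvalue of $B^{T}$, and so of $B$, giving $\lambda_1(M)\le\lambda_1(B)$ and finishing the proof.

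The one delicate point I would flag as the main obstacle is exactly the nonvanishing of $P^{T}x$: without the nonnegativity assumption a real principal eigenvector could have components that cancel to zero on every part, in which case the projection would collapse and the argument would fail. This is the single step where the hypothesis is truly spent, and it is the calculation I would check most carefully before writing the proof out in detail.
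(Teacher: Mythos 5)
The paper offers no proof of this lemma at all: it is quoted verbatim from the cited reference \cite{you}, so there is nothing in the text to compare your argument against. Your proof is correct and self-contained. The intertwining identity $MP=PB$ is verified properly, the lower bound $\lambda_1(B)\le\lambda_1(M)$ follows from Lemma \ref{le1} (or directly from $MP=PB$ and the injectivity of $P$), and the upper bound is obtained by pushing a nonnegative Perron eigenvector of $M$ through $P^{T}$; your identification of the nonvanishing of $P^{T}x$ as the step where nonnegativity is spent is exactly right. The one caveat worth recording: the lemma as stated assumes only that $M$ is nonnegative, whereas your transposition step $P^{T}M=B^{T}P^{T}$ uses symmetry of $M$. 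That is harmless for every application in this paper (where $M=Q(H)$ is symmetric), but to cover the general nonnegative case you should replace the transposition by the same computation with a \emph{left} Perron eigenvector: from $x^{T}M=\lambda_1(M)\,x^{T}$ and $MP=PB$ one gets $(P^{T}x)^{T}B=\lambda_1(M)\,(P^{T}x)^{T}$, and the rest of your argument goes through unchanged.
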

 O \cite{o} constructed a family of connected bipartite graphs $\mathcal{H}(\delta,k)$, where  $\delta$ and $k$ are two  positive integers. For each graph $G\in\mathcal{H}(\delta,k)$ with the bipartition $V(G)=V_1\cup V_2$,  $G$ satisfies the following  conditions:
\begin{wst}
	\item[{\rm (i)}] every vertex in $V_1$ has degree $\delta$,
	\item[{\rm (ii)}] $|V_1|=|V_2|+k$,
	\item[{\rm (iii)}]  the degrees of vertices in $V_2$ are equal.
\end{wst}


The exact values of the fractional matching number and the  spectral radius for graphs in $\mathcal{H}(\delta,k)$ are obtained as below.
\begin{lemma}\cite{o}\label{lem1}
If $H\in\mathcal{H}(\delta,k)$, then $\alpha^{\prime}_*(H)=\frac{|V(H)|-k}{2}$ and $\lambda_1(H)=\delta\sqrt{1+\frac{2k}{|V(H)|-k}}$.
\end{lemma}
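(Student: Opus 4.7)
The plan is to handle the two claims separately. Set $n_2=|V_2|$ so that $|V_1|=n_2+k$ and $|V(H)|=2n_2+k$, hence $n_2=\frac{|V(H)|-k}{2}$. Double counting the edges between $V_1$ and $V_2$ gives $|E(H)|=(n_2+k)\delta$, so by condition (iii) every vertex of $V_2$ has degree $d_2=\frac{(n_2+k)\delta}{n_2}$.

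For the fractional matching number, I will appeal to Lemma \ref{lemm0}(ii). Taking $S=V_2$ makes $H-S$ the empty graph on $V_1$, giving $i(H-S)-|S|=(n_2+k)-n_2=k$; this provides $\alpha^{\prime}_*(H)\leq\frac{|V(H)|-k}{2}$. For the reverse inequality I would verify Hall's condition on $V_2$: for any $T\subseteq V_2$, counting edges between $T$ and $N(T)$ yields $|T|d_2\leq|N(T)|\delta$, so $|N(T)|\geq|T|\cdot\frac{n_2+k}{n_2}\geq|T|$. Thus $H$ has a matching saturating $V_2$, and since fractional matching number coincides with matching number on bipartite graphs (a standard LP integrality fact), $\alpha^{\prime}_*(H)\geq n_2=\frac{|V(H)|-k}{2}$, yielding equality.

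For the spectral radius, the bipartition $(V_1,V_2)$ is equitable for $A(H)$: every vertex of $V_1$ has $0$ neighbors in $V_1$ and $\delta$ in $V_2$, and every vertex of $V_2$ has $0$ in $V_2$ and $d_2$ in $V_1$. The equitable quotient matrix is
\[
B=\begin{pmatrix}0 & \delta\\ d_2 & 0\end{pmatrix},
\]
whose largest eigenvalue is $\sqrt{\delta d_2}=\delta\sqrt{\frac{n_2+k}{n_2}}=\delta\sqrt{1+\frac{2k}{|V(H)|-k}}$. Since $A(H)$ is nonnegative, Lemma \ref{lemm1} gives $\lambda_1(H)=\lambda_1(B)$, which is exactly the target formula.

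The only genuinely non-routine step is verifying the Hall condition needed to close the fractional matching computation; everything else reduces to direct counting and an application of the equitable-partition lemmas already quoted. An alternative that bypasses Hall's theorem is to argue directly from Lemma \ref{lemm0}(ii) that no choice of $S$ can exceed $k$ in deficiency by using the bipartite structure (any isolated vertex of $H-S$ lying in $V_2$ forces $N(v)\subseteq S$, which I can quantify by the same edge-count inequality), but invoking Hall's theorem is cleaner.
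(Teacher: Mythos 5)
Your proof is correct. Note, however, that the paper does not prove this lemma at all: it is imported verbatim from the cited reference of O, so there is no in-paper argument to compare against. Your derivation is a sound self-contained substitute. The upper bound $\alpha'_*(H)\le\frac{|V(H)|-k}{2}$ via $S=V_2$ in Lemma \ref{lemm0}(ii) is exactly right, and the Hall-condition count $|T|d_2\le |N(T)|\delta$ with $d_2=\frac{(n_2+k)\delta}{n_2}\ge\delta$ correctly produces a matching saturating $V_2$; in fact you do not even need LP integrality for bipartite graphs here, since any integral matching is already a fractional matching, so $\alpha'_*\ge\alpha'\ge n_2$ suffices. Your spectral computation via the equitable quotient $\left(\begin{smallmatrix}0&\delta\\ d_2&0\end{smallmatrix}\right)$ of $A(H)$ and Lemma \ref{lemm1} is precisely the technique the authors themselves use in Lemma \ref{l2} to compute $q_1(H)$ from the quotient of $Q(H)$, so your argument fits naturally alongside the paper's own methods.
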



We now determine the signless Laplacian spectral radius of graphs in $\mathcal{H}(\delta,k)$.

\begin{lemma}\label{l2}
If $H\in\mathcal{H}(\delta,k)$, then  $q_1(H)=\frac{2\delta|V(H)|}{|V(H)|-k}$.
\end{lemma}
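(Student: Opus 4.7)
The plan is to exhibit an equitable partition of the signless Laplacian matrix $Q(H)$ using the bipartition, extract the quotient matrix, and then apply Lemma \ref{lemm1}.

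First, I would set $a=|V_1|$ and $b=|V_2|$, so that $a=b+k$ and $a+b=|V(H)|$. Counting edges from the $V_1$ side gives $|E(H)|=\delta a$, and since all vertices in $V_2$ share a common degree (by condition (iii)), that common degree must be $\delta a/b$. This is the only nonroutine input — everything else is bookkeeping.

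Next, I would observe that the partition $V(H)=V_1\cup V_2$ is equitable for $Q(H)=D(H)+A(H)$. For any $v\in V_1$, the row of $Q(H)$ restricted to columns in $V_1$ sums to $\delta$ (only the diagonal contributes, since $H$ is bipartite), and restricted to $V_2$ it sums to $\delta$ (one for each of the $\delta$ neighbors in $V_2$). Similarly every row indexed by $V_2$ has block row-sums $\delta a/b$ and $\delta a/b$. Hence the equitable quotient matrix is
\[
B=\begin{pmatrix} \delta & \delta \\ \delta a/b & \delta a/b \end{pmatrix}.
\]
The characteristic polynomial of $B$ factors easily, and its eigenvalues are $0$ and $\delta+\delta a/b=\delta(a+b)/b$.

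Since $Q(H)$ is nonnegative, Lemma \ref{lemm1} applies and gives $q_1(H)=\lambda_1(Q(H))=\lambda_1(B)=\delta(a+b)/b$. Finally, from $a+b=|V(H)|$ and $a-b=k$ I get $b=(|V(H)|-k)/2$, so
\[
q_1(H)=\frac{\delta\,|V(H)|}{(|V(H)|-k)/2}=\frac{2\delta\,|V(H)|}{|V(H)|-k},
\]
as desired. The only point requiring care is justifying that the partition is genuinely equitable (i.e.\ that the row sums are constant within each block, not just on average); this follows directly from the regularity of each side, which is why conditions (i) and (iii) in the definition of $\mathcal H(\delta,k)$ are both needed.
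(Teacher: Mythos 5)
Your proof is correct and follows essentially the same route as the paper: form the equitable quotient matrix of $Q(H)$ with respect to the bipartition, compute its largest eigenvalue $\delta(1+|V_1|/|V_2|)$, and invoke Lemma \ref{lemm1} together with $|V_2|=(|V(H)|-k)/2$. Your explicit verification that the block row sums are constant (rather than merely citing the partition as equitable) is a welcome bit of extra care, but the argument is the same.
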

\begin{proof}
If $H\in\mathcal{H}(\delta,k)$, by the partition $V(H)=V_1\cup V_2$, we can obtain the quotient matrix of $Q(H)$:
	$$B=\left(\begin{array}{cc}
\delta& 	\delta\\
	\frac{\delta|V_1|}{|V_2|}&  	\frac{\delta|V_1|}{|V_2|}
	\end{array}
	\right).$$
It is easy to calculate that  $\lambda_1(B)=\delta\big(1+\frac{|V_1|}{|V_2|}\big)=\delta\big(2+\frac{k}{|V_2|}\big)$. By the construction of $\mathcal{H}(\delta,k)$, the partition $V(H)=V_1\cup V_2$ is equitable and $|V_2|=\frac{|V(H)|-k}{2}$. By Lemma \ref{lemm1}, we have
	$$q_1(H)=\lambda_1(Q(H))=\lambda_1(B)=\delta\bigg(2+\frac{k}{|V_2|}\bigg)=\delta\bigg(2+\frac{2k}{|V(H)|-k}\bigg)=\frac{2\delta|V(H)|}{|V(H)|-k}$$
	as desired.
\end{proof}

\section{A relationship between $q_1(G)$ and $\alpha^{\prime}_{*}(G)$}
 In this section, we investigate the relationship between the signless Laplacian spectral radius of a graph with minimum degree $\delta$ and its fractional matching number. Similar to the proof of Lemma 3.2 in \cite{o}, we can obtain the following lemma.
\begin{lemma}\label{lemmm2}
Let $G$ be an $n$-vertex connected graph with minimum degree $\delta$, and let $k$ be a real number between 0 and $n$. If $q_1(G)<\frac{2n\delta}{n-k}$, then $\alpha^{\prime}_{*}(G)>\frac{n-k}{2}$.
\end{lemma}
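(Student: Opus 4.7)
The plan is to argue by contrapositive: assume $\alpha^{\prime}_{*}(G)\leq\frac{n-k}{2}$ and derive $q_1(G)\geq\frac{2n\delta}{n-k}$. Lemma~\ref{lemm0}(ii) immediately supplies a set $S\subseteq V(G)$ with $i(G-S)-|S|\geq k$. Writing $s=|S|$, $i=i(G-S)$ and letting $V_1$ denote the set of isolated vertices of $G-S$, one has $|V_1|=i\geq s+k$, and since $V_1\cap S=\emptyset$ also $s+i\leq n$; these two inequalities are the only combinatorial data I will use. (For $k>0$, the connectedness of $G$ with $n\geq 2$ forces $s\geq 1$, so the division by $s$ below is legal.)

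Next I would reduce from $G$ to a bipartite subgraph of a controlled form. Each $v\in V_1$ is isolated in $G-S$, so all of its $G$-neighbors lie in $S$; the hypothesis $\delta(G)\geq\delta$ then forces $v$ to send at least $\delta$ edges into $S$. Retaining exactly $\delta$ such edges at each $v\in V_1$ produces a bipartite subgraph $H'\subseteq G$ with parts $V_1,S$ in which every vertex of $V_1$ has degree $\delta$, so $|E(H')|=\delta i$. By Lemma~\ref{lemmm1}, $q_1(G)\geq q_1(H')$, and the task reduces to bounding $q_1(H')$ from below by $\frac{2n\delta}{n-k}$.

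For this spectral bound I would apply the Rayleigh principle to the test vector $X$ defined by $X_v=s$ for $v\in V_1$ and $X_v=i$ for $v\in S$; this is, up to scale, the Perron eigenvector of the $2\times 2$ average quotient matrix of $Q(H')$ attached to the bipartition, whose leading eigenvalue equals $\delta(s+i)/s$. A short expansion, using $\sum_{v\in V_1}d_{H'}(v)=\sum_{v\in S}d_{H'}(v)=\delta i$, yields $X^{T}Q(H')X=\delta i(s+i)^{2}$ and $X^{T}X=si(s+i)$, so
$$q_1(H')\;\geq\;\frac{X^{T}Q(H')X}{X^{T}X}\;=\;\frac{\delta(s+i)}{s}.$$

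Finally, the target inequality $\frac{\delta(s+i)}{s}\geq\frac{2n\delta}{n-k}$ simplifies, after clearing denominators, to $n(i-s)\geq(s+i)k$, which is immediate from $i-s\geq k$ combined with $s+i\leq n$. Chaining the estimates gives $q_1(G)\geq\frac{2n\delta}{n-k}$, contradicting the hypothesis and closing the contrapositive. The step I expect to be most delicate is the Rayleigh bound itself: a uniform test vector only recovers something of order $\frac{2\delta i}{s+i}$, which goes the wrong way, and it is precisely the weighting $(s,i)$, chosen to mimic the non-equitable partition of $H'$, that produces the sharper factor $(s+i)/s$ needed to absorb the denominator $n-k$ in the target bound.
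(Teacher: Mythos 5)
Your proof is correct and follows essentially the same route as the paper: both pass to the bipartite subgraph between the isolated vertices of $G-S$ and $S$, bound $q_1$ from below by the Perron value $\delta(|A|+|S|)/|S|$ of the associated $2\times 2$ quotient matrix, and finish with the inequalities $i(G-S)-|S|\geq k$ and $|A|+|S|\leq n$. The only difference is cosmetic but to your credit: your explicit Rayleigh-quotient computation with the weights $(s,i)$ rigorously delivers the bound $q_1(H')\geq\lambda_1(B)$, whereas the paper asserts that the partition $A\cup S$ is equitable (it need not be) in order to invoke its equitable-partition lemma, when what is really needed there is exactly the interlacing/Rayleigh argument you supply.
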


\begin{proof}
If $\alpha^{\prime}_{*}(G)\leq\frac{n-k}{2}$, by Lemma \ref{lemm0}, there exists a vertex set $S\subseteq V(G)$ such that  $i(G-S)-|S|\geq{k}$. Since $i(G-S)$ is an integer, then $i(G-S)-|S|\geq{\lceil{k}\rceil}$. Let $A$ be the set of all isolated vertices in $G-S$.  Then,
$$|A|=i(G-S)\geq|S|+\lceil{k}\rceil.$$
 Consider the bipartite subgraph $H$ with the partitions $V(H)=A\cup S$ such that $E(H)$ is the set of edges of $G$ having one endpoint in $A$ and the other in $S$. Let $r$ be the number of edges in $H$. Then $r\geq\delta|A|$. For the partition $V(H)=A\cup S$, we can obtain a quotient matrix of $Q(H)$ as below:
	$$B=\left(\begin{array}{cc}
	\frac{r}{|A|}& 	\frac{r}{|A|}\\
	\frac{r}{|S|}&  	\frac{r}{|S|}
	\end{array}
	\right).$$
	It is easy to calculate that $\lambda_1(B)=\frac{r(|A|+|S|)}{|A||S|}$. Since the partition is equitable, by Lemma \ref{le1}, we have
	$$q_1(G)=\lambda_1(Q(G))\geq\lambda_1(B)=\frac{r(|A|+|S|)}{|A||S|}\geq\delta\frac{|A|+|S|}{|S|}\geq\delta\frac{2|S|+\lceil{k}\rceil}{|S|}\geq\delta\bigg(2+\frac{2\lceil{k}\rceil}{n-k}\bigg)\geq\frac{2n\delta}{n-k}$$
since $r\geq\delta{|A|}$, $|A|\geq |S|+\lceil{k}\rceil$,  $n\geq |A|+|S|\geq2|S|+k$ and $|S|\geq\delta$.
\end{proof}

 \begin{theorem}\label{t1}
 	If $G$ is an $n$-vertex graph with minimum degree $\delta$, then we have $$\alpha^{\prime}_*(G)\geq\frac{n\delta}{q_1(G)},$$ with equality if and only if $k=\frac{n(q_1(G)-2\delta)}{q_1(G)}$ is an integer and $G$ is an element of $\mathcal{H}(\delta,k)$.
 \end{theorem}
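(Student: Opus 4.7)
The plan is to derive the inequality by applying Lemma~\ref{lemmm2} to a sequence of thresholds approaching $k^*:=n(q_1(G)-2\delta)/q_1(G)$ from above, and then to obtain the equality characterization by tracing which inequalities in the proof of Lemma~\ref{lemmm2} are forced to be tight.

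For the bound, I set $k^*:=n(q_1(G)-2\delta)/q_1(G)$, so that $q_1(G)=2n\delta/(n-k^*)$. For every $\varepsilon>0$, let $k_\varepsilon:=k^*+\varepsilon$; then $q_1(G)<2n\delta/(n-k_\varepsilon)$, and Lemma~\ref{lemmm2} yields $\alpha^{\prime}_*(G)>(n-k_\varepsilon)/2$. Sending $\varepsilon\to 0^+$ produces $\alpha^{\prime}_*(G)\geq (n-k^*)/2=n\delta/q_1(G)$. Sufficiency of the equality condition is then immediate: if $k^*$ is a positive integer and $G\in\mathcal{H}(\delta,k^*)$, Lemma~\ref{lem1} gives $\alpha^{\prime}_*(G)=(n-k^*)/2$ and Lemma~\ref{l2} gives $q_1(G)=2n\delta/(n-k^*)$, and these combine to $\alpha^{\prime}_*(G)=n\delta/q_1(G)$.

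For necessity, I would assume $\alpha^{\prime}_*(G)=n\delta/q_1(G)$ and set $k:=n-2\alpha^{\prime}_*(G)=n(q_1(G)-2\delta)/q_1(G)$; Lemma~\ref{lemm0}(i) then gives $k\in\mathbb{Z}$. By Lemma~\ref{lemm0}(ii) choose $S\subseteq V(G)$ with $i(G-S)-|S|=k$, and let $A$ be the set of isolated vertices of $G-S$, so $|A|=|S|+k$. Every inequality in the chain at the end of the proof of Lemma~\ref{lemmm2} is now forced to be an equality. Tightness of $r\geq\delta|A|$ makes every vertex of $A$ have degree exactly $\delta$ in $G$ with all neighbors in $S$; tightness of $n\geq|A|+|S|\geq 2|S|+k$ gives $V(G)=A\cup S$, $|S|=(n-k)/2$, and $|A|=(n+k)/2$; tightness of $q_1(G)\geq q_1(H)$ (with $G$ connected, by passing to the component containing $A\cup S$ if necessary) forces $H=G$, so $G$ is bipartite with bipartition $(A,S)$; finally, tightness of $\lambda_1(Q(H))=\lambda_1(B)$ forces this bipartition to be equitable, and since vertices of $A$ already share the common degree $\delta$, vertices of $S$ must share a common degree as well. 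Combined, these are precisely the defining conditions of $\mathcal{H}(\delta,k)$.

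The main obstacle I expect is the last step of the necessity argument: extracting from $\lambda_1(Q(H))=\lambda_1(B)$ the fact that the Perron eigenvector of $Q(H)$ is constant on each part (equivalently, that the partition is equitable), and hence that all vertices of $S$ share a common degree. This relies on uniqueness and positivity of the Perron eigenvector of $Q(H)$ (valid because $H$ is connected in the extremal configuration) together with a Rayleigh quotient comparison. A secondary subtlety is the boundary case $k=0$, in which the extremal set $S$ can be empty and the quotient matrix argument degenerates; in that regime equality holds exactly when $G$ admits a fractional perfect matching, a situation the family $\mathcal{H}(\delta,k)$ does not cover since it is defined only for positive $k$.
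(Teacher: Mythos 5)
Your proposal follows essentially the same route as the paper: the limiting application of Lemma~\ref{lemmm2} for the inequality, Lemmas~\ref{lem1} and~\ref{l2} for sufficiency, and forcing equality throughout the chain of inequalities from Lemma~\ref{lemmm2} for necessity. Your necessity argument is in fact more careful than the paper's (which simply asserts the conclusion from the tight inequalities without the Perron-eigenvector/equitability step), and your remark about the degenerate case $k=0$ --- where the extremal $S$ may be empty and, for instance, any non-bipartite $\delta$-regular graph attains equality without lying in $\mathcal{H}(\delta,k)$ --- identifies a genuine gap in the stated equality characterization that the paper's own proof does not address.
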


\begin{proof}
	By Lemma \ref{lemmm2}, $\alpha^{\prime}_{*}(G)>\frac{n-k}{2}$ if $q_1(G)<\frac{2n\delta}{n-k}$. Note that $\frac{2n}{n-k}$ is an increasing function of $k$ on $[0,n)$, thus $\frac{2n\delta}{n-k}$ decreases towards $q_1(G)$ as $k$ decreases towards $z$, where $z=\frac{n(q_1(G)-2\delta)}{q_1(G)}$.  Then for each value $k\in (z,n)$, we have $\alpha^{\prime}_{*}(G)>\frac{n-k}{2}$ by Lemma \ref{lemmm2}. Let $k$ tend to $z$ and finally equal to $z$, we obtain $\alpha^{\prime}_*(G)\geq\frac{n\delta}{q_1(G)}$ as desired.
	
	If $k=\frac{n(q_1(G)-2\delta)}{q_1(G)}$ is an integer and $G\in\mathcal{H}(\delta,k)$, then by Lemma \ref{lem1}, we have $\alpha^{\prime}_{*}(G)=\frac{n-k}{2}=\frac{n\delta}{q_1(G)}$.  For the 'only if' part, assume that $\alpha^{\prime}_{*}(G)=\frac{n\delta}{q_1(G)}$. Then $k=z$ and the inequalities in Lemma \ref{lemmm2} become equality. Since $\lceil{k}\rceil=k$, $k$ must be an integer. In addition, note that $r=\delta{|A|}$, $|A|=|S|+k$,  $n=2|S|+k$ and $|S|=\delta$, $G$ must be included in $\mathcal{H}(\delta,k)$.
\end{proof}

Let $G$ be a bipartite  graph with partition $V(G)=V_1\cup V_2$.  Then $G$ is said to be semi-regular if all vertices in $V_i$ have the same degree $d_i$ for $i=1,2$.

\begin{lemma}\cite{cve}\label{lemm4}
Let $G$ be a connected graph graph. Then $$q_1(G)\leq\max\{d(u)+d(v):uv\in E(G)\},$$
with equality if and only if $G$ is a regular bipartite graph or a semi-regular bipartite graph.
\end{lemma}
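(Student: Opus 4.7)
The plan is to apply Perron--Frobenius theory to the signless Laplacian $Q(G) = D(G) + A(G)$. Since $G$ is connected, $Q(G)$ is symmetric, entrywise nonnegative, and irreducible, so it admits a positive eigenvector $x$ with $Q(G)x = q_1(G)\,x$. The eigenvalue equation read off at any vertex $v$,
$$q_1(G)\, x_v = d(v)\, x_v + \sum_{w \sim v} x_w,$$
will be the only analytic tool I need.

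The critical choice is to take $u_1$ to be a vertex at which $x$ attains its maximum, and then to take $u_2$ to be a neighbor of $u_1$ whose $x$-value is largest among $N(u_1)$. Since $x_w \leq x_{u_2}$ for every $w \in N(u_1)$ and $x_w \leq x_{u_1}$ for every $w$ (in particular every $w \in N(u_2)$), the eigenvalue equations at $u_1$ and at $u_2$ give
$$(q_1(G) - d(u_1))\, x_{u_1} \leq d(u_1)\, x_{u_2}, \qquad (q_1(G) - d(u_2))\, x_{u_2} \leq d(u_2)\, x_{u_1}.$$
In the nontrivial case $q_1(G) \geq \max\{d(u_1),d(u_2)\}$ both sides are nonnegative, so multiplying these two inequalities and cancelling the positive factor $x_{u_1} x_{u_2}$ yields $(q_1(G) - d(u_1))(q_1(G) - d(u_2)) \leq d(u_1)\,d(u_2)$. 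Expanding simplifies to $q_1(G)^2 \leq q_1(G)(d(u_1) + d(u_2))$, hence $q_1(G) \leq d(u_1) + d(u_2)$; since $u_1 u_2 \in E(G)$, the desired bound $q_1(G) \leq \max\{d(u)+d(v) : uv \in E(G)\}$ follows. The opposite case is trivial because already $d(u_1) + d(u_2) > q_1(G)$.

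For the equality characterization, one direction is a direct computation via an equitable quotient matrix: if $G$ is regular or semi-regular bipartite with the two parts having degrees $d_1$ and $d_2$, then the bipartition is equitable for $Q(G)$, and Lemma \ref{lemm1} together with the $2 \times 2$ quotient matrix forces $q_1(G) = d_1 + d_2$, which equals $\max\{d(u)+d(v) : uv \in E(G)\}$. The harder direction is where I expect the main obstacle to lie: equality in the two displayed inequalities forces $x_w = x_{u_2}$ for every $w \in N(u_1)$ and $x_w = x_{u_1}$ for every $w \in N(u_2)$. I would then propagate these local identities along walks in $G$, using connectedness together with repeated application of the Perron equation and the maximality of $x_{u_1}$, to show that $x$ assumes at most two distinct values on $V(G)$. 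The two level sets of $x$ must then form a bipartition of $G$ on which each part carries constant degree, which is exactly the (semi-)regular bipartite structure demanded by the lemma.
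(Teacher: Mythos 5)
Your Perron--Frobenius argument for the inequality itself is correct and complete: writing the eigenvalue equation at a vertex $u_1$ where the positive eigenvector is maximal and at its largest-valued neighbour $u_2$, and multiplying the two resulting inequalities, does yield $q_1(G)\le d(u_1)+d(u_2)\le\max\{d(u)+d(v):uv\in E(G)\}$, and your handling of the degenerate case is fine. Note that the paper gives no proof of this lemma at all --- it is quoted from Cvetkovi\'c--Rowlinson--Simi\'c --- so there is nothing to compare against on the paper's side; your argument is a legitimate self-contained proof of the bound.

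The genuine gap is in the ``only if'' direction of the equality characterization, and it cannot be patched, because the characterization as transcribed in the lemma is false. Your plan breaks exactly at the sentence ``the two level sets of $x$ must then form a bipartition.'' If $G$ is a connected $k$-regular graph (bipartite or not), then $Q(G)=kI+A(G)$ and $q_1(G)=k+\lambda_1(A(G))=2k=\max\{d(u)+d(v)\}$, so equality always holds; but the Perron vector is constant, your forced identities $x_w=x_{u_2}$ for $w\in N(u_1)$ and $x_w=x_{u_1}$ for $w\in N(u_2)$ are vacuous, the two ``level sets'' coincide, and no bipartite structure exists to be extracted. Concretely, $K_3$ has $q_1=4=2+2$ and is not bipartite (the Petersen graph is another example). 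The correct statement of the cited result is that equality holds if and only if $G$ is \emph{regular or semi-regular bipartite}; the version in the paper (``regular bipartite or semi-regular bipartite'') erroneously excludes the non-bipartite regular graphs. With the corrected target your approach can be salvaged: if $x$ is constant the eigenvalue equation gives $q_1=2d(v)$ for every $v$, so $G$ is regular and you are done; if $x$ is non-constant your propagation along walks can be pushed through, though the standard clean route is to use $q_1(G)=2+\lambda_1(A(L(G)))$ via the incidence matrix, bound $\lambda_1(A(L(G)))$ by the maximum degree of the line graph, and invoke the fact that a connected graph whose line graph is regular is regular or semi-regular bipartite.
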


Let  $g(G)$ be the length of a shortest cycle in $G$, and let $\alpha(G)$ be the independence number of $G$ which is the cardinality of the largest independent set of $G$. Similar to the proof of Theorem 2.6 in \cite{xue}, we can obtain the following theorem.
\begin{theorem}\label{lemm5}
Let $G$ be a graph with independence number $\alpha(G)$. If $g(G)\geq5$, then $q_1(G)<2+\alpha(G)$.
\end{theorem}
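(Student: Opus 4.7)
The plan is to combine the signless Laplacian degree-sum bound in Lemma~\ref{lemm4} with an independent set built from the closed neighborhood of an extremal edge. I would first invoke Lemma~\ref{lemm4} to get $q_1(G)\le \max\{d(u)+d(v):uv\in E(G)\}$ and fix an edge $u^{*}v^{*}$ attaining this maximum. Since $g(G)\ge 5$ forbids both $C_{3}$ and $C_{4}$, the absence of $C_{3}$ gives $N(u^{*})\cap N(v^{*})=\emptyset$, so $I:=(N(u^{*})\cup N(v^{*}))\setminus\{u^{*},v^{*}\}$ has exactly $d(u^{*})+d(v^{*})-2$ elements; any edge with both endpoints in $I$ would form either a $C_{3}$ (if both endpoints lie in $N(u^{*})$, or both in $N(v^{*})$) or a $C_{4}$ (if one lies on each side), so $I$ is independent. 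This yields $\alpha(G)\ge d(u^{*})+d(v^{*})-2$, whence $q_{1}(G)\le \alpha(G)+2$.

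To sharpen the bound to a strict inequality, the idea is to show that the two estimates above cannot both be tight. By the equality clause in Lemma~\ref{lemm4}, tightness of the first step forces $G$ to be regular bipartite or semi-regular bipartite with parts $V_{1}\cup V_{2}$ of biregular degrees $d_{1}\ge d_{2}$; in particular $V_{2}$ is itself an independent set of $G$, so $\alpha(G)\ge |V_{2}|$. Fixing $v\in V_{2}$ and counting via $C_{4}$-freeness, each of its $d_{2}$ neighbors in $V_{1}$ contributes $d_{1}-1$ further neighbors in $V_{2}$, all distinct from $v$ and from one another, so $|V_{2}|\ge 1+d_{2}(d_{1}-1)$. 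A short check of $d_{1}d_{2}-d_{1}-2d_{2}+3>0$ for all $d_{1}\ge d_{2}\ge 1$ (trivial when $d_{2}=1$, and reducing to $d_{2}^{2}-3d_{2}+3>0$ when $d_{1}=d_{2}$, with monotonicity in $d_{1}$ covering the rest) gives $|V_{2}|\ge d_{1}+d_{2}-1$, so $\alpha(G)+2\ge d_{1}+d_{2}+1>d_{1}+d_{2}=q_{1}(G)$, contradicting equality.

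The main obstacle is the strictness: the single-edge independent set $I$ already saturates $\alpha(G)\ge d(u^{*})+d(v^{*})-2$, so strictness cannot be extracted from that step in isolation. One is pushed into the equality analysis of Lemma~\ref{lemm4}, and only the bipartite (semi-)regular structure together with $C_{4}$-freeness supplies an additional vertex in the independent set. This is where the hypothesis $g(G)\ge 5$ is used in full: with only $g(G)\ge 4$ (triangle-free) the second-neighborhood inflation collapses, and graphs like $K_{d,d}$ would violate the strict bound.
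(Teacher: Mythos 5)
Your proposal is correct relative to the paper's stated lemmas and follows essentially the same route as the paper's proof: bound $q_1(G)$ by $\max\{d(u)+d(v):uv\in E(G)\}$ via Lemma~\ref{lemm4}, note that $g(G)\ge 5$ makes $(N(u^{*})\cup N(v^{*}))\setminus\{u^{*},v^{*}\}$ an independent set of size $d(u^{*})+d(v^{*})-2$, and then kill the equality case using the bipartite (semi-)regular structure supplied by the equality clause of Lemma~\ref{lemm4}. The only real divergence is in that last step: the paper picks a single vertex $w_1\in N(v_1)\setminus\{u_1\}$ and shows $N(w_1)\cup A$ is an independent set of size $2|A|+1>|A|+|B|$ (independence coming from the fact that both sets sit in one side of the bipartition), whereas you take the whole smaller-degree side $V_2$ and use $C_4$-freeness to get $|V_2|\ge 1+d_2(d_1-1)\ge d_1+d_2-1$; your count is slightly more robust (it does not silently require $N(v_1)\setminus\{u_1\}\ne\emptyset$), but the two arguments are equivalent in substance. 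One caveat you share with the paper: both proofs hinge on the equality condition of Lemma~\ref{lemm4} exactly as printed, namely that equality forces $G$ to be regular \emph{bipartite} or semi-regular bipartite. In the standard form of that result equality also holds for every connected regular graph, bipartite or not, and indeed $C_5$ satisfies $g(C_5)=5$ and $q_1(C_5)=4=2+\alpha(C_5)$, so the strict inequality of the theorem fails there. This is a defect of the lemma/theorem as stated rather than of your reasoning, but it means neither your equality analysis nor the paper's survives once the lemma is corrected, since a non-bipartite regular graph need not contain the large independent set that both arguments extract from a bipartition.
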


\begin{proof}
	Without loss of generality, assume that $G$ is connected and $d(u_1)+d(v_1)=\max\{d(u)+d(v):uv\in E(G)\}$. Let $A=N(u_1)\setminus\{v_1\}$ and $B=N(v_1)\setminus\{u_1\}$. Since $g(G)\geq5$, then $|A|+|B|\leq\alpha(G)$ and thus $d(u_1)+d(v_1)=2+|A|+|B|\leq2+\alpha(G)$. By Lemma \ref{lemm4}, $q_1(G)\leq2+\alpha(G)$. If $q_1(G)=2+\alpha(G)$, then $\alpha(G)=|A|+|B|$ and thus $G$ is bipartite regular or semi-regular. Suppose that $|A|\geq|B|$ for convenience. Let $w_1$ be a vertex of $B$. Then $u_1,w_1\in B$ since both $u_1$ and $w_1$ are adjacent to $v_1$.  Since $G$ is regular or semi-regular, then $d(u_1)=d(w_1)$ and thus $|N(w_1)\setminus\{v_1\}|=|A|$. Note that $N(w_1)\cup A$ is an independent set of $G$, then $\alpha(G)\geq|N(w_1)\cup A|=2|A|+1$, a contradiction to the fact $\alpha(G)=|A|+|B|$.
\end{proof}

Together with Theorems \ref{t1} and \ref{lemm5}, we obtain a lower bound on the fractional matching number in terms of the independence number and minimum degree, which improves the lower bound obtained in \cite{xue}.

\begin{corollary}\label{lem277}
Let $G$ be a connected graph with independence number $\alpha(G)$ and minimum degree $\delta$. If $g(G)\geq5$, then
$$\alpha^{\prime}_*(G)>\frac{n\delta}{\alpha(G)+2}.$$
\end{corollary}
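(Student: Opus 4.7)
The plan is to obtain the corollary as a direct composition of Theorem~\ref{t1} and Theorem~\ref{lemm5}, both of which are available at this point in the paper. Since the conclusion is a strict inequality, I would be careful to track where the strictness comes from.

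First I would observe that because $G$ is connected with minimum degree $\delta \ge 1$, the quantity $n\delta$ is a positive constant. By Theorem~\ref{t1}, we have the lower bound
\[
\alpha^{\prime}_*(G) \;\ge\; \frac{n\delta}{q_1(G)}.
\]
Next I would invoke Theorem~\ref{lemm5}: since $g(G)\ge 5$, the signless Laplacian spectral radius satisfies the \emph{strict} bound $q_1(G) < 2+\alpha(G)$. Because $q_1(G)>0$ and $2+\alpha(G)>0$, the map $x\mapsto n\delta/x$ is strictly decreasing on $(0,\infty)$, so the strict inequality flips to
\[
\frac{n\delta}{q_1(G)} \;>\; \frac{n\delta}{\alpha(G)+2}.
\]
Chaining these two estimates yields $\alpha^{\prime}_*(G) > n\delta/(\alpha(G)+2)$, which is exactly the claim.

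There is essentially no obstacle here, since both ingredients are already established; the only thing worth emphasizing in the writeup is that the strictness is inherited from Theorem~\ref{lemm5} and is preserved when dividing $n\delta$ by a strictly larger positive quantity. (The hypothesis that $G$ is connected is what allows an unambiguous application of Theorem~\ref{lemm5}, whose proof begins by reducing to the connected case.)
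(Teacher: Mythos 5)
Your proposal is correct and matches the paper exactly: the corollary is stated there as an immediate consequence of Theorem~\ref{t1} combined with Theorem~\ref{lemm5}, with the strict inequality inherited from $q_1(G)<2+\alpha(G)$ precisely as you describe. No further comment is needed.
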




\section{Signless Laplacian spectral radius and fractional perfect matching}
Some sufficient condition for the existence of a fractional perfect matching in a graph in terms of the spectral radius were obtain in \cite{xue}. In this section, we are devoted to give some  sufficient conditions for a graph to have a fractional perfect matching from the viewpoint of signless Laplacian spectral radius.

\begin{theorem}
Let $G$ be an $n$-vertex connected graph with minimum degree $\delta$. If $q_1(G)<\frac{2n\delta}{n-1}$, then $G$ has a fractional perfect matching.
\end{theorem}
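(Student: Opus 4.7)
The plan is to deduce this directly from Lemma \ref{lemmm2} (or equivalently from Theorem \ref{t1}) together with the integrality part of Lemma \ref{lemm0}. The hypothesis $q_1(G) < \frac{2n\delta}{n-1}$ is exactly the conclusion of Lemma \ref{lemmm2} applied with the choice $k = 1$, so the right move is to specialize $k$ to $1$ and read off what this gives for $\alpha'_*(G)$.

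Concretely, I would argue as follows. First, set $k=1$ in Lemma \ref{lemmm2}. Since $1$ is a real number between $0$ and $n$, and the assumption $q_1(G) < \frac{2n\delta}{n-1}$ matches the hypothesis of that lemma verbatim, we conclude
\[
\alpha'_*(G) \;>\; \frac{n-1}{2}.
\]
Second, invoke Lemma \ref{lemm0}(i), which says $2\alpha'_*(G)$ is an integer. From $2\alpha'_*(G) > n-1$ and integrality, we get $2\alpha'_*(G) \geq n$. Third, observe the trivial upper bound $\alpha'_*(G) \leq n/2$ (since each vertex constraint caps the total weight on incident edges by $1$, summing over vertices gives $\sum_v \sum_{e \in \Gamma(v)} f(e) = 2\sum_e f(e) \leq n$). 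Combining these yields $\alpha'_*(G) = n/2$, which by definition means $G$ has a fractional perfect matching.

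The only subtlety is that one should check that $k=1$ is indeed a legitimate input to Lemma \ref{lemmm2}; but the lemma is stated for any real $k \in (0,n)$, and for $n \geq 2$ (which is forced since $G$ is connected with minimum degree $\delta \geq 1$, else $\frac{2n\delta}{n-1}$ is either $0$ or undefined), this is immediate. There is no real obstacle here; the statement is essentially a direct corollary of the previously established material, with the integrality of $2\alpha'_*(G)$ doing the work of converting a strict inequality about $\alpha'_*(G)$ into the exact identity $\alpha'_*(G) = n/2$.
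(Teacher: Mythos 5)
Your proof is correct and follows essentially the same route as the paper: apply Lemma \ref{lemmm2} with $k=1$ to get $\alpha'_*(G) > \frac{n-1}{2}$, then use the integrality of $2\alpha'_*(G)$ from Lemma \ref{lemm0} to conclude $\alpha'_*(G) = \frac{n}{2}$. Your explicit mention of the upper bound $\alpha'_*(G) \leq n/2$ is a detail the paper leaves implicit, but the argument is the same.
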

\begin{proof}
If $q_1(G)<\frac{2n\delta}{n-1}$, then it follows from  Lemma \ref{lemmm2} that $\alpha^{\prime}_{*}(G)>\frac{n-1}{2}$. By Lemma \ref{lemm0}, $2\alpha^{\prime}_*(G)$ is an integer, then $\alpha^{\prime}_{*}(G)=\frac{n}{2}$, which means that $G$ has a fractional perfect matching.
\end{proof}
 We now give a sufficient condition for the existence of a fractional perfect matching in a graph in terms of the signless Laplacian spectral radius of its complement.
\begin{theorem}
	Let $G$ be an $n$-vertex connected graph with minimum degree $\delta$ and $G^c$ be the complement of $G$.  If $q_1(G^c)<2\delta$, then $G$ has a fractional perfect matching.
\end{theorem}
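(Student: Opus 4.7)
The plan is to prove the contrapositive by contradiction. Assume $G$ has no fractional perfect matching. Then $\alpha^{\prime}_*(G)<n/2$, and Lemma~\ref{lemm0}(ii) furnishes a set $S\subseteq V(G)$ with $i(G-S)\geq|S|+1$. Let $I$ denote the set of isolated vertices of $G-S$, so that $|I|=i(G-S)\geq|S|+1$.

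The crucial step is to convert the pair $(S,I)$ into a large clique living inside $G^c$. Because $G$ is connected with $n\geq 2$ we have $\delta\geq 1$ and $i(G)=0$, forcing $S\neq\emptyset$ and hence $I\neq\emptyset$. Every vertex $v\in I$ has no neighbour in $V(G)\setminus S$, so all of its $G$-neighbours lie in $S$; therefore $|S|\geq d_G(v)\geq\delta$, and consequently $|I|\geq\delta+1$. Since $I$ is independent in $G$, its vertices induce a clique $K_{|I|}$ in $G^c$.

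The last step is to pass to the signless Laplacian spectral radius. The clique $K_{|I|}$ is connected, so it lies inside a single connected component $C$ of $G^c$. Applying Lemma~\ref{lemmm1} to $K_{|I|}\subseteq C$ and invoking Lemma~\ref{k1} yields
\[
q_1(G^c)\;\geq\;q_1(C)\;\geq\;q_1(K_{|I|})\;=\;2(|I|-1)\;\geq\;2\delta,
\]
contradicting $q_1(G^c)<2\delta$. I do not anticipate any genuine obstacle: the proof hinges entirely on the combinatorial extraction of the $(\delta+1)$-clique in $G^c$ from the Berge--Tutte witness $(S,I)$, and the only small technicality is the connectedness hypothesis in Lemma~\ref{lemmm1}, which is handled by restricting to the component of $G^c$ that contains~$I$.
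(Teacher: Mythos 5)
Your proof is correct and follows essentially the same route as the paper: take the Berge--Tutte witness $S$ from Lemma~\ref{lemm0}, observe that the isolated vertices of $G-S$ force $|S|\geq\delta$ and hence form an independent set of size at least $\delta+1$ in $G$, i.e.\ a clique $K_{\delta+1}$ in $G^c$, and conclude $q_1(G^c)\geq 2\delta$ via Lemmas~\ref{lemmm1} and~\ref{k1}. Your extra care in passing to the connected component of $G^c$ containing the clique (since Lemma~\ref{lemmm1} is stated for connected graphs) is a small but welcome refinement over the paper's version.
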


\begin{proof}
	Assume to the contrary that $\alpha^{\prime}_*(G)<\frac{n}{2}$. By Lemma \ref{lemm0}, there exists a vertex set $S\subseteq V(G)$ such that $i(G-S)-|S|>0$. Denote by $A$ the set of isolated vertices in $G-S$. Note that the neighbours of each isolated vertex belong to $S$, then $|S|\geq\delta$, which implies that $|A|\geq|S|+1\geq\delta+1$. Since $G^c[A]$ is a clique, by Lemmas \ref{lemmm1} and \ref{k1}, we have $$q_1(G^c)\geq q_1(G^c[A])=2(|A|-1)=2\delta,$$
	a contradiction. This completes the proof.
\end{proof}
\begin{theorem}
	Let $G$ be an $n$-vertex connected graph with minimum degree $\delta$ and $G^c$ be the complement of $G$.  If $q_1(G^c)<2\delta+1$, then $G$ has a fractional perfect matching unless $G\cong H_1\vee H_2$, where $H_1$ is a $(\delta+1)$-independent set and $H_2$ is any graph of order $\delta$.
\end{theorem}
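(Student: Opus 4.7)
The plan is to refine the proof of the previous theorem by pushing the vertex-count analysis one step further. I would proceed by contradiction, assuming $G$ has no fractional perfect matching. By Lemma \ref{lemm0}(i), $2\alpha^{\prime}_*(G)$ is an integer strictly less than $n$, so Lemma \ref{lemm0}(ii) supplies a set $S\subseteq V(G)$ with $i(G-S)-|S|\geq 1$. Writing $A$ for the set of isolated vertices of $G-S$, we have $|A|\geq|S|+1$; since every vertex of $A$ has all its at least $\delta$ neighbors confined to $S$, we deduce $|S|\geq\delta$ and hence $|A|\geq\delta+1$.

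Since $A$ is independent in $G$, the subgraph $G^c[A]$ is a complete graph on $|A|$ vertices, so by Lemmas \ref{lemmm1} and \ref{k1}
$$q_1(G^c)\geq q_1(G^c[A])=2(|A|-1)\geq 2\delta.$$
Combined with the hypothesis $q_1(G^c)<2\delta+1$, this forces $|A|-1<\delta+\tfrac12$, so $|A|\leq\delta+1$. Hence $|A|=\delta+1$ and $|S|=\delta$.

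Next, I would show that $V(G)=A\cup S$. If some $v\in V(G)\setminus(A\cup S)$ existed, then $v\notin S$, and since all neighbors of any $A$-vertex lie in $S$, the vertex $v$ would be non-adjacent in $G$ to every vertex of $A$. Thus $A\cup\{v\}$ would span a clique of size $\delta+2$ in $G^c$, and Lemmas \ref{lemmm1} and \ref{k1} would give $q_1(G^c)\geq 2(\delta+1)=2\delta+2$, contradicting the hypothesis. With $V(G)=A\cup S$, each vertex of $A$ has at least $\delta$ neighbors, all within the $\delta$-vertex set $S$, so each such vertex is adjacent to every vertex of $S$. Combined with the independence of $A$ in $G$, this identifies $G=G[A]\vee G[S]$, where $G[A]$ is the empty graph on $\delta+1$ vertices and $G[S]$ is an arbitrary graph on $\delta$ vertices, i.e.\ the exceptional form $H_1\vee H_2$.

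The only genuinely new step beyond the previous theorem is ruling out vertices outside $A\cup S$, and this is precisely where the slightly weaker hypothesis $q_1(G^c)<2\delta+1$ is used to its full strength: it permits the clique $K_{\delta+1}$ inside $G^c[A]$ but forbids any enlargement to $K_{\delta+2}$. A quick sanity check on the exceptional family is also worth noting: for $G=H_1\vee H_2$ as described, $G^c$ is the disjoint union of $K_{\delta+1}$ with $H_2^c$ on only $\delta$ vertices, so $q_1(G^c)=\max\{2\delta,q_1(H_2^c)\}=2\delta<2\delta+1$, while taking $S=V(H_2)$ gives $i(G-S)-|S|=1$ so that $G$ indeed has no fractional perfect matching.
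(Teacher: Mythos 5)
Your proof is correct and follows essentially the same route as the paper's: contradiction via Lemma \ref{lemm0}, the clique $G^c[A]$ bounding $|A|$ at $\delta+1$, the exclusion of vertices outside $A\cup S$ via a $K_{\delta+2}$ in $G^c$, and the identification of the join structure. You actually supply details the paper leaves implicit (why an outside vertex extends the clique in $G^c$, why each $A$-vertex is joined to all of $S$, and the converse check that the exceptional family really fails to have a fractional perfect matching), which only strengthens the argument.
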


\begin{proof}
Suppose that $\alpha^{\prime}_*(G)<\frac{n}{2}$. By Lemma \ref{lemm0}, there exists a vertex set $S\subseteq V(G)$ such that $i(G-S)-|S|>0$. Let $A$ be the set of isolated vertices in $G-S$.  Then $|A|\geq|S|+1\geq\delta+1$. If $|A|\geq\delta+2$, then there is a clique of order $\delta+2$ in $G^c$ and thus  $q_1(G^c)\geq2(\delta+1)$, a contradiction. Furthermore, we have $|A|=|S|+1=\delta+1$. If $V(G)\ne A\cup S$, then there is a clique of order $\delta+2$ in $G^c$ and thus $q_1(G^c)\geq2(\delta+1)$, a contradiction. Hence, we have $V(G)=A\cup S$. Therefore, we have $G\cong H_1\vee H_2$. This completes the proof.
\end{proof}
For regular graphs, the authors in \cite{b,S} investigated the relations between the eigenvalues and the perfect matching. Here, we obtain the relations between the eigenvalues and the fractional  perfect matching for regular graphs.
 \begin{theorem}\label{ll}
 	Let $G$ be an $n$-vertex connected $k$-regular graph with eigenvalues $k=\lambda_1\geq\lambda_2\geq\cdots\geq\lambda_n$. If
 \begin{align*}
 & \hspace{3cm}\lambda_3\leq\left\{ 
 \begin{array}{ll}
 {k-1+\frac{3}{k+1},} &\textrm{if $k$ is even};\\
 {k-1+\frac{4}{k+2},} &\textrm{if $k$ is odd},\\
 \end{array} 
 \right. &
 \end{align*}
 then $G$ has a fractional  perfect matching.
 \end{theorem}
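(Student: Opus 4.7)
The plan is to derive the theorem from the first theorem of Section 4 by exploiting the relation between adjacency and signless Laplacian spectra for regular graphs. If $G$ is $k$-regular, then $D(G) = kI$, so $Q(G) = kI + A(G)$ and the signless Laplacian eigenvalues are exactly $k + \lambda_i(G)$ for $i = 1, \ldots, n$. In particular, $q_1(G) = k + \lambda_1 = 2k$ and the minimum degree is $\delta = k$.

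Substituting these values into the hypothesis $q_1(G) < \frac{2n\delta}{n-1}$ of the first theorem of this section, the condition becomes $2k < \frac{2nk}{n-1}$, i.e.\ $n - 1 < n$, which holds for every $n \geq 2$. That theorem then delivers a fractional perfect matching of $G$, completing the proof. So the argument amounts to a single application of the earlier theorem once the identity $q_1(G) = 2k$ for $k$-regular graphs is recorded; the bound on $\lambda_3$ is not actually invoked along this route.

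I do not anticipate any genuine obstacle. As an independent sanity check on the conclusion, a fractional perfect matching can be written down explicitly by setting $f(e) = 1/k$ for every edge $e \in E(G)$: then $\sum_{e \in \Gamma(v)} f(e) = k \cdot (1/k) = 1$ for every vertex $v$, and the total weight is $|E(G)|/k = (nk/2)/k = n/2$. Either route yields the claim, so the planned write-up will consist essentially of the spectral identity for regular graphs and a citation of the earlier theorem.
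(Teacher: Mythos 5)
Your proof is correct, and it takes a genuinely different and strictly more elementary route than the paper's. The paper argues by contradiction: if $G$ had no fractional perfect matching it would in particular have no perfect matching, and then the $\lambda_3$ hypothesis is invoked by running the argument of Theorem 4.8.9 in \cite{bre} (the Cioab\u{a}--Gregory--Haemers eigenvalue criterion for perfect matchings in regular graphs) to reach a contradiction; so the paper's route really does use the spectral hypothesis, and it implicitly needs $n$ even for the cited perfect-matching theorem to apply, which leaves the odd-$n$ case unaddressed there. Your route bypasses all of this: since $Q(G)=kI+A(G)$ for a $k$-regular graph, $q_1(G)=k+\lambda_1=2k$ and $\delta=k$, so the hypothesis $q_1(G)<\frac{2n\delta}{n-1}$ of the section's first theorem reduces to $2k(n-1)<2nk$, which holds for every $k\geq 1$ (and $k\geq 1$ is automatic for a connected graph on $n\geq 2$ vertices, while $\lambda_3$ only exists for $n\geq 3$); alternatively your uniform weighting $f\equiv 1/k$ exhibits a fractional perfect matching directly, with $\sum_{e\in\Gamma(v)}f(e)=1$ at every vertex and total weight $n/2$, valid regardless of the parity of $n$. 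What your argument buys is exactly the observation you flag: the $\lambda_3$ bound is never needed, so the theorem as stated is trivially true --- every connected $k$-regular graph with $k\geq1$ has a fractional perfect matching --- and the eigenvalue hypothesis earns its keep only for genuine (integral) perfect matchings, which is the setting of the theorem the paper borrows. In this sense your proof is not merely an alternative; it reveals that the paper's appeal to \cite{bre} is superfluous for the stated conclusion and simultaneously repairs the odd-$n$ gap in the paper's reduction.
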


 \begin{proof}
 	Assume that  there exists not  a fractional perfect matching in $G$. Then $G$ has no perfect matching. Similar to the proof of the Theorem 4.8.9 in \cite{bre}, we can get a contradiction. 
 \end{proof}
 
 By Theorem \ref{ll}, we can get the following corollary immediately.
 
 \begin{corollary}
 	A regular graph with algebraic connectivity at least one has a fractional perfect matching.
 \end{corollary}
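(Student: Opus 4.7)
The plan is to deduce this corollary directly from Theorem \ref{ll} by translating the algebraic-connectivity hypothesis into a bound on the third largest adjacency eigenvalue $\lambda_3$.

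First, I would recall the well-known identity for a $k$-regular graph: the Laplacian $L(G)=kI-A(G)$, so the Laplacian eigenvalues are precisely $k-\lambda_i(G)$ and in particular the algebraic connectivity (the second smallest Laplacian eigenvalue) equals $k-\lambda_2(G)$. The hypothesis that the algebraic connectivity is at least one therefore gives $\lambda_2(G)\leq k-1$. It also forces $G$ to be connected, since a disconnected graph has algebraic connectivity zero. Hence we are in the setting of Theorem \ref{ll}.

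Next, since $\lambda_2(G)\geq\lambda_3(G)$, the inequality $\lambda_2(G)\leq k-1$ yields $\lambda_3(G)\leq k-1$. Observing that both thresholds in Theorem \ref{ll} strictly exceed $k-1$, namely
\[
k-1<k-1+\tfrac{3}{k+1}\quad\text{and}\quad k-1<k-1+\tfrac{4}{k+2},
\]
the hypothesis of Theorem \ref{ll} is satisfied whether $k$ is even or odd. Applying Theorem \ref{ll} then yields a fractional perfect matching in $G$.

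There is essentially no obstacle here; the only subtlety to watch is handling the trivial case $k=0$ (where $G$ is edgeless and the algebraic-connectivity hypothesis fails unless $n=1$), which may be briefly dismissed, together with noting that the algebraic-connectivity assumption itself guarantees connectedness so that Theorem \ref{ll} is applicable.
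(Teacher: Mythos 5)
Your derivation is correct and is exactly the intended one: the paper states the corollary follows immediately from Theorem \ref{ll}, and the translation you give (algebraic connectivity $k-\lambda_2\geq 1$ forces connectivity and $\lambda_3\leq\lambda_2\leq k-1$, which is below both thresholds) is the standard argument. No issues.
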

\section*{Acknowledgement(s)}
The authors would like to express their sincere gratitude to all the referees for their careful reading and insightful suggestions.

\end{document}